\newtheorem{theorem}{Theorem}
\newtheorem{lemma}[theorem]{Lemma}
\newtheorem{remark}[theorem]{Remark}
\newtheorem{construction}[theorem]{Construction}
\begin{document}

\title[Geometries with trialities coming from maps with Wilson trialities]{Incidence geometries with trialities coming from maps with Wilson trialities}

%% (obligatory)
%% The name and the addresses of the first author ...
%% \author{F. Second_name}
%% \address{Street XY, Town, State}
%% current address, e-mail and url are optional
%% \curraddr{...}
%% \email{...@...}
%% \urladdr{...}

\author{Dimitri Leemans}
\address{Universit\'e Libre de Bruxelles, D\'epartement de Math\'ematique, C.P.216 - Alg\`ebre et Combinatoire, Boulevard du Triomphe, 1050 Brussels, Belgium and Department of Mathematics and Mathematical Statistics, Ume\aa\; University,
901 87 Ume\aa, Sweden
}
\curraddr{}
\email{Leemans.Dimitri@ulb.be}
\urladdr{}

%% (optional)
%% If there are more authors, then second author, third author contains
%% the same items

\author{Klara Stokes}
\address{Department of Mathematics and Mathematical Statistics, Ume\aa\; University,
901 87 Ume\aa, Sweden}
\email{klara.stokes@umu.se}

%% (optional) If any thanks for the financial supports, grants, ...
\thanks{This research was accomplished when Dimitri Leemans was a Guest Visiting Professor at the University of Ume\aa, thanks to the Knut and Alice Wallenberg Foundation.}

\thanks{}

%% (optional) Keywords

\keywords{}

%% (obligatory)
%% AMS Classification 2000
%% The Primary classification is obligatory,
%% the Secondary classification is optional.
%% \subjclass{primary}{secondary}
%% f.e. \subjclass{35R35, 49M15, 49N50}{} or \subjclass{35R35, 49M15}{49N50}

\subjclass{51A10,51E24,20C33}{}%{}

%% (optional) Abstract
\begin{abstract}
Triality is a classical notion in geometry that arose in the context of the Lie groups of type $D_4$. Another notion of triality, Wilson triality, appears in the context of reflexible maps. We build a bridge between these two notions, showing how to construct an incidence geometry with a triality from a map that admits a Wilson triality. We also extend a result by
Jones and Poulton, showing that for every prime power $q$, the group ${\rm L}_2(q^3)$ has maps that admit Wilson trialities but no dualities.
\end{abstract}

\maketitle

%%%%% private macros, f.e. the different environments
% the environment proof and proof* is defined automaticaly

%%%% the main article
\section{Introduction}

In seven dimensional projective space, the quadric $Q$ defined by the equation $x_0x_7+x_1x_6+x_2x_5+x_3x_4=0$ features a property called triality. The triality resembles the duality of projective space in that it permutes geometric objects of distinct types while preserving the geometric incidence structure. Duality of projective space ${\rm PG}(n,\mathbb{F})$ turns the Hasse diagram of the projective subspaces upside-down; in two dimensions it sends points to lines and lines to points; in three dimensions it sends points to planes and planes to points, while it is fixing the lines.

For an incidence geometry to have a non-trivial triality it must have at least three types of geometric objects. The triality of the quadric $Q$ acts on an incidence geometry of four types: the points, the lines and the two types of maximal projective subspaces contained within the quadric.  The maximal projective subspaces are three-spaces, and they divide in two equivalence classes $A$ and $B$ so that two three-spaces are of the same type if and only if they intersect in a projective subspace of odd dimension. The triality sends the points of $Q$ to subspaces of type $A$, subspaces of type $A$ to subspaces of type $B$, and subspaces of type $B$ to points on $Q$, while the lines are sent to lines.
%Klara: The above was confusing because it was unclear what happened to the lines.

The first known description of this triality is due to Study (see~\cite[Page 435]{Porteous}, see also~\cite{Study1} and~\cite{Study2}\footnote{ See \url{http://neo-classical-physics.info/uploads/3/4/3/6/34363841/study-analytical_kinematics.pdf} 
 for an english translation of~\cite{Study2}.}). In the beginning of the 20th century he used the quadric $Q$ to  parametrize the motions of three-space.
Cartan was the first to use the term triality in~\cite{Cartan}, defining the phenomenon in the  context of Lie groups. 
  Later, Freudenthal further explored triality in the context of Lie algebras \cite{Freudenthal}.
  
  In 1959, Tits classified in~\cite{Tits1959} the trialities with absolute points of the quadric $Q$.
He observed that the set of absolute points and the incident lines form a rank two incidence geometry with the property that the girth of the incidence graph is two times the diameter. This discovery motivated his definition of generalized polygon - the content of the appendix of~\cite{Tits1959}.
In 1978, Doro introduced in~\cite{Doro1978} the notion of groups with triality, inspired by observations from Glauberman~\cite{Glauberman1968}.
%  G. Glauberman, On loops of odd order II, J. Algebra 8 (1968) 393–414.                                                                                                                                  
%[6] S. Doro, Simple Moufang loops, Math. Proc. Camb. Phil. Soc. 83 (1978) 377–392.                                                                                                                       
In a recent book~\cite{Hall2019}, Hall showed that Moufang loops and groups with triality are "essentially the same thing". The Moufang loop of the classical triality $D_4$ is (essentially) the multiplicative loop of the unit octonions.

While duality occurs more or less frequently in a variety of contexts,
the brief historical review we just made shows that triality was primarily associated with a quadric in seven dimensions, until Doro introduced the abstract notion of groups with triality. According to Doro, a group $G=\langle T\rangle$, generated by a single conjugacy class of involutions $T$, is a group with a triality if there is a surjective group homomorphism  $\pi:G\rightarrow S_3$ and for all $t,r\in T$ we have that if $\pi(t)\neq \pi(r)$ then $|\pi(r)\pi(t)|=3$.
%=\{\sigma, \rho\}$ satisfying                                                                                                                                                                            
%\rho$x^{-1}x^{\sigma}\left(x^{-1}x^{\sigma}\right)^{\rho}\left(x^{-1}x^{\sigma}\right)^{\rho^2}=1$ {\color{red} for all $x\in G$}.                                                                       
This generalizes the triality of the group ${\rm PSO}(8)$ (visible in its universal cover ${\rm Spin}(8)$, see for instance~\cite[Chapter 8]{Conway} for more details on ${\rm SO}(8)$ and ${\rm Spin}(8)$), which is the group naturally acting on the quadric $Q$, as well as the Lie group of Cartan's triality. 
It led, among other things, to the classification of finite Moufang loops by Liebeck~\cite{Liebeck1987}.

Doro's definition allows to work with triality in an abstract manner, dealing only with the group.
However, the definition requires $G$ to be generated by a single conjugacy class of involutions. Also, the group of type-permuting correlations is required to contain involutions, in particular it has
 to be the symmetric group $S_3$ and cannot be, for example, the cyclic group $C_3$.

In a previous paper~\cite{LeemansStokes} we focused on a different kind of generalization of triality; constructing flag-transitive incidence geometries with type-permuting correlation group containing an element of order three.
We also provided a coset geometry construction and a description of the geometry in terms of a reduced incidence graph: a labeled quotient graph of the incidence graph under the triality.

Within the theory of maps there is also a notion of triality given by Wilson~\cite{Wilson1979}; a triality is an operator of order three on the space of maps defined as the composition of the duality operator and the Petrie duality operator. A map has a triality if it is isomorphic to the image of this operator of order three.  

Maps featuring trialities but no dualities seem to be rarely occurring. The first infinite family was provided by Jones and Poulton in 2010; a family of reflexible maps for the groups ${\rm L}_2(2^{3n})$ \cite{JP2010}. 

In this paper, we extend Jones and Poulton's result from 2010 by showing that there is a family of reflexible maps also exist for the groups ${\rm L}_2(q^3)$ with $q$ odd.
We then prove that any map with a Wilson triality gives rise to an incidence geometry with a triality, thereby providing plenty of examples of incidence geometries with trialities, with and without dualities, coming from the literature of maps.

\section{Definitions and notation}\label{definitions}
\subsection{Maps}
For a comprehensive introduction to maps, we refer to~\cite{BS1985}.
A {\em map} is a 2-cell embedding of a graph into a closed surface without boundary.
A map $\mathcal M$ has a vertex set $V := V(\mathcal M)$, an edge set $E := E(\mathcal M)$ and a set of faces $F := F(\mathcal M)$. The set $V\cup E\cup F$ is the set of {\em elements} of $\mathcal M$.
A triple $T := \{v,e,f\}$ with $v\in V$, $e\in E$ and $f\in F$ is called a {\em flag} if each element of $T$ is incident to the other two elements of $T$.
The faces of $\mathcal M$ are the simply connected components of the surface obtained by removing the embedded graph from the surface.

An {\em automorphism} of a map is a permutation of its elements preserving the sets $V$, $E$ and $F$ and incidence between the elements.
The set of all automorphisms of a map $\mathcal M$ together with composition forms a group denoted by ${\rm Aut}(\mathcal M)$.
A map ${\mathcal M}$ is {\em reflexible} if its automorphism group has a unique orbit on its set of flags.
%{\color{red}\sout{Since we want to deal with Petrie duals, we will see in Section~\ref{wilson} that we can restrict ourselves to reflexible maps.}}
%\sout{{\color{blue}we explain further down in this section why we restrict to reflexible maps}}

If a map $\mathcal M$ is reflexible, then we can pick an arbitrary flag $F := \{v,e,f\}$ of $\mathcal M$ and the group ${\rm Aut}(\mathcal M)$ is generated by three special involutions, $\rho_0$, $\rho_1$ and $\rho_2$ such that $\rho_0$ and $\rho_2$ commute, $\rho_0$ exchanges $v$ with one of its neighbours along the edge $e$ on the border of $f$, $\rho_1$ exchanges the edge $e$ with one of its neighbours containing $v$ along the border $f$ and $\rho_2$ exchanges $f$ with a face containing $v$ and $e$.
Let $p$ be the order of  $\rho_0\rho_1$, $q$ the order of $\rho_1\rho_2$ and $r$ the order of $\rho_0\rho_2\rho_1$. We then say that $\mathcal M$ is of {\rm type} $\{p,q\}_r$.

One can also define a map to be a transitive permutation representation of the abstract group $\Gamma=\langle r_0,r_1,r_2|r_0^2=r_1^2=r_2^2=(r_0r_2)^2=1\rangle$ on the set of flags \cite{JoTh1983}. 

The monodromy group of the map is the permutation group on the flags induced by $\Gamma$. The map is reflexible  if and only if its monodromy group is isomorphic to its automorphism group \cite{JoJoWo2008}.

Wilson introduced in~\cite{Wilson1979} a group $\Sigma\cong S_3$ of six operations on reflexible maps, generated by the duality operation $D$ that transposes vertices and faces, and the Petrie duality $P$ that transposes faces and Petrie polygons. 
He pointed out that the Petrie duality is not a closed operator on maps that are not reflexible. Recently, the action of $\Sigma$ on non-reflexive maps has also received attention, see for example \cite{AbEl2022}.

Jones and Thornton showed that the group $\Sigma$ is the outer automorphism group ${\rm Out}(\Gamma)={\rm Aut}(\Gamma)/{\rm Inn}(\Gamma)\cong S_3$ \cite{JoTh1983}. 
%Any reflexible map $\mathcal M$ is obtained as a quotient of $\Gamma$ by a normal subgroup $N$.
%Klara: this sentence didn't have a dot - did you finish it?

Both the operations $D$ and $P$ have order $2$. The other elements of $\Sigma$ are the identity operation, $DPD = PDP$ (a duality transposing vertices and Petrie polygons) and two mutually inverse operations, $DP$ and $PD$, of order $3$, that he called {\em trialities}, each permuting vertices, faces and Petrie polygons in a cycle of length 3. The Wilson operators act on the monodromy group of a map in a hexad manner, see Figure~\ref{classes}. 

\begin{figure}
\begin{center}
\begin{picture}(135,160)
\put(50,150){$(\rho_0,\rho_1,\rho_2)$}
\put(0,100){$(\rho_2,\rho_1,\rho_0)$}
\put(75,140){\line(-2,-1){60}}
\put(25,130){$D$}
\put(75,140){\line(2,-1){60}}
\put(125,130){$P$}
\put(100,100){$(\rho_0\rho_2,\rho_1,\rho_2)$}
\put(15,90){\line(0,-1){30}}
\put(0,70){$P$}
\put(135,90){\line(0,-1){30}}
\put(140,70){$D$}
\put(0,50){$(\rho_0\rho_2,\rho_1,\rho_0)$}
\put(100,50){$(\rho_2,\rho_1,\rho_0\rho_2)$}
\put(15,40){\line(2,-1){60}}
\put(25,20){$D$}
\put(135,40){\line(-2,-1){60}}
\put(125,20){$P$}
\put(50,00){$(\rho_0,\rho_1,\rho_0\rho_2)$}
\end{picture}
\caption{Wilson's operations on the monodromy group of a map.}\label{classes}
\end{center}
\end{figure}
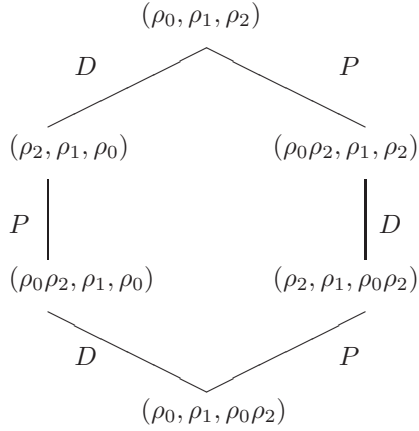

The images of a map under these six operations give maps that may be isomorphic to each other. Wilson divides regular maps into four classes. Class I is the one where the six maps obtained are pairwise non-isomorphic.
Class II is the one where the six maps split in three pairs of isomorphic maps. Class III is the one where the six maps split in two pairs of triples of pairwise isomorphic maps. And finally class IV contains the maps for which all the six images are pairwise isomorphic. 

In other words, Class I consists of the maps with no duality, nor Petrie duality, nor triality, Class II consists of the maps with dualities or Petrie dualities or a duality that exchanges the vertices with the Petrie polygons (obtained by $DPD = PDP$),
%klara: think this was wrong in the text, it said "exchanges the faces with the Petrie polygons"
but no trialities, Class III consists of the maps with trialities but no dualities, nor Petrie dualities, and Class IV consists of the maps with dualities, Petrie dualities and trialities.

Jones and Thornton showed that every finite map has a finite reflexible cover of Class IV \cite{JoTh1983}. Richter, \v{S}ir\'a\v{n} and Wang proved that there is a Class IV map of every even valency~\cite{RiSiWa2012}. 
This was later extended to odd valency by Fraser, Jeans and \v{S}ir\'a\v{n} \cite{FJS2019}. 
The kaleidoscopic maps due to Archdeacon, Conder and \v{S}ir\'a\v{n} are also, by definition, of Class IV \cite{ACS2014}.  

The first sporadic example of a map of Class III was constructed by Wilson in 1979 \cite{Wilson1979}. 
It seemed to him at the time that maps of Class III are rare. As reported by Jones and Poulton, Conder found more sporadic examples of Class III maps in a computer search in 2006, but over 30 years passed before Jones and Poulton~\cite{JP2010} produced an infinite family of reflexible maps of Class III with automorphism group ${\rm L_2}(2^{3n})$, for $n$ a positive integer, thereby extending Wilson's  first example. In the same article Jones and Poulton also constructed maps of Class III as covers of other maps of Class III as well as parallel products.
In a recent paper Abrams and Ellis-Monaghan also constructed non-reflexible maps of Class III \cite{AbEl2022}.

%In a recent paper, Abrams and Ellis-Monaghan construct all self-trial maps on up to seven edges and an infinite family of maps with trialities but no dualities that do not arise as covers or parallel connections of regular (reflexible?) maps, thereby answering a question by Jones and Poulton \cite{AbEl2019}. The maps they construct are not reflexible.

Examples of maps of small genus of all four classes can be found in Conder's list of regular maps \cite{Conder2009}. 

\subsection{Coset geometries and incidence geometries}
For a comprehensive introduction to coset geometries and incidence geometries, we refer to~\cite{Bue95}.
A {\em pregeometry} is a quadruple $\Gamma=(X,*,t,I)$ where $X$ is a set of {\em elements}, $I$ is a finite set of {\em types}, $t$ is a surjective {\em type function} $t:X\twoheadrightarrow I$ and $*$ is an {\em incidence relation}, that is a binary relation on the set $X$, such that elements of the same type are not related. 
The cardinality $|I|$ is called the {\em rank} of $\Gamma$. 
The relation $*$ is called the {\em incidence relation} of $\Gamma$. 
As any symmetric binary relation, it can be represented using a symmetric matrix or an undirected graph. The graph representing the incidence relation is called the {\em incidence graph} of the incidence geometry. It is an $n$-partite graph (with $n := |I|$), with the elements of each type in each partition. 
 
A {\em flag} of a pregeometry is a set of pairwise incident elements. The {\em rank} of a flag is the number of types in the flag. Since all elements in a flag have different type, this coincides with the number of elements in the flag. The type of a flag $F$ is the set $t(F)$.
A {\em chamber} is a flag of type $I$.
A pregeometry  is an {\em (incidence) geometry} if every flag is contained in a chamber. 
A geometry is {\em firm} if every non-maximal flag is contained in at least two chambers.

The {\em residue} of a flag $F$ in a geometry $\Gamma$ is the set of elements in $\Gamma$ that are not in $F$ but that are incident with all elements of $F$. The residue of a flag in a geometry is a geometry. 
The rank of a residue of a flag is the number of types in the residue.  
Since $\Gamma$ is a geometry, every flag is contained in a chamber, therefore the rank of the residue of a flag of rank $k$ in a geometry of rank $n$ is $n-k$. 

A {\em correlation} of an incidence geometry $\Gamma(X,*,t,I)$ is a permutation $\alpha$ of $X$ such that for all $x,y\in X$, $t(x) = t(y) \iff t(\alpha(x)) = t(\alpha(y))$ and $x*y \iff \alpha(x) * \alpha(y)$.
The set of all correlations of a geometry $\Gamma$, together with composition, forms a group called the {\em correlation group} of $\Gamma$ and denoted by ${\rm Cor}(\Gamma)$.
It contains a subgroup consisting of all the elements that induce the identity on the set of types (that is, correlations $\alpha$ such that $t(x) = t(\alpha(x))$ for all $x\in X$). This subgroup is called the group of {\em automorphisms} of $\Gamma$ and it is a normal subgroup of ${\rm Cor}(\Gamma)$ that we denote by ${\rm Aut}(\Gamma)$.

If $\Gamma$ is an incidence geometry of rank $n \geq r$, then an $r$-{\em ality} of $\Gamma$ is a  correlation of order $r$ of $\Gamma$, permuting $r$ types cyclically. 
When $r=2$ we talk about {\em duality} (when emphasis is on the duality being of order two also known as a {\em polarity}), and when $r=3$ we talk about {\em triality}.

Following Tits~\cite{Tits1957}, a coset pregeometry is a pregeometry $\Gamma$ constructed from a group $G$ and a set of subgroups $\{H_1,\dots,H_n\}$ of $G$ (called maximal parabolic subgroups), so that 
\begin{itemize}
\item the type set is $I=\{1,\dots,n\}$, 
\item the elements are the left cosets of $H_i$, 
\item the type function maps every coset $gH_i$ to $i$ and
\item two elements are incident if their intersection is not empty. 
\end{itemize}
A coset pregeometry that is a geometry is called a {\em coset geometry}.

\section{All groups ${\rm L}(2,q^3)$ admit at least one class III map}
In order to prove the claim of the section title, we will use the subgroup structure of ${\rm L}_2(q)$.

In even characteristic, Jones and Poulton~\cite[Theorem 2.3]{JP2010} showed that every group ${\rm L}_2(q^3)$ is the automorphism group of at least one class III map. Their proof is constructive. They provide generating elements of ${\rm L}_2(2^{3n})$ that give a reflexible map.

We now deal with the odd characteristic. But first, we recall what is the subgroup structure of ${\rm L}_2(q)$.

\subsection{The subgroup structure of ${\rm L}_2(q)$}
\label{pslsub}

We will require properties of the subgroup structure of ${\rm L}_2(q)$.
The subgroup structure of ${\rm L}_2(q)$ was first obtained in papers by Moore and Wiman. It may be found in Dickson~\cite{Dic58}.
The theorem reproduced here was obtained by Patricia Vanden Cruyce in her PhD Thesis~\cite{Van85}.

\begin{theorem}\label{sub}

The group ${\rm L}_2(q)$ of order $\frac{q  (q^2 - 1)}{(2,q-1)}$, where $q=p^r$ with $p$ a
prime and $r$ a positive integer, contains only:

\begin{enumerate}
\item elementary abelian subgroups of order $q$, denoted by $E_q$.

\item cyclic subgroups of order $d$,  denoted by $d$, for all divisors $d$ of $\frac{(q\pm 1)}{(2,q-1)}$.

\item $\frac{q(q^2 -1)}{2d (2,q-1)}$ dihedral groups of order $2d$, denoted by $D_{2d}$,
for all divisors $d$ of $\frac{(q\pm 1)}{(2,q-1)}$ with $d>2$. The number of conjugacy
classes of these subgroups is $1$ if $\frac{(q\pm 1)}{d (2,q-1)}$ is odd, and $2$ if
it is even.

\item For $q$ odd, $\frac{q(q^2 -1)}{12 (2,q-1)}$ dihedral groups of order 4 (Klein 4-groups), denoted by $2^2$. The number of conjugacy classes of these groups is $1$ if  $q \equiv \pm 3 (
8)$ and $2$ if $q \equiv \pm 1 ( 8)$. For $q$ even, the groups $2^2$ are listed under
family~5.

\item elementary abelian subgroups of order 
$p^s$, denoted by $E_{p^s}$, for all natural number $s$ such that $1\leq s \leq r-1$.

\item subgroups $E_{p^s}\!:\!h$,
each a semidirect product of an elementary abelian group $E_{p^s}$ and a cyclic 
group of order $h$, for all natural numbers $s$ such that $1\leq s \leq r$ and
all divisors $h$ of $\frac {p^k -1}{(2,1,1)}$, where $k=(r,s)$ and $(2,1,1)$ is defined as 2, 1 or 1 according as $p>2$ and $\frac{r}{k}$ is even, $p>2$ and $\frac{r}{k}$ is odd, or $p=2$.

\item For $q$ odd or $q=4^m$,\  alternating groups $A_4$,
of order 12.

\item For $q \equiv \pm 1 ( 8)$, symmetric groups $S_4$, of order 24.

\item For $q \equiv \pm 1 ( 5)$ or $q=4^m$, alternating groups $A_5$, of order 60. For $q \equiv 0 ( 5)$, the groups $A_5$ are listed under family~10.

\item $\frac{q(q^2 -1)}{ p^w (p^{2w}-1)}$ groups ${\rm L}_2(p^w)$, for all divisors $w$ of
$r$. The number of conjugacy classes of these groups is $2$, $1$ or $1$ according as $p>2$
and $\frac {r}{w}$ is even, $p>2$ and $ \frac {r}{w}$ is odd, or $p=2$.

\item groups ${\rm PGL}_2(p^w)$,
for all $w$ such that $2w$ is a divisor of $r$.
\end{enumerate}
\end{theorem}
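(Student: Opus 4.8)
The plan is to exploit the 2-transitive action of $G={\rm L}_2(q)$ on the $q+1$ points of the projective line ${\rm PG}(1,q)$ together with a classification of elements by their fixed points. I would first record the element types: the identity; unipotent elements of order $p$, each fixing exactly one point; split (hyperbolic) elements of order dividing $\frac{q-1}{(2,q-1)}$, each fixing two points of ${\rm PG}(1,q)$; and non-split (elliptic) elements of order dividing $\frac{q+1}{(2,q-1)}$, fixing a conjugate pair of points over $\mathbb{F}_{q^2}$ but none over $\mathbb{F}_q$. The structural input this provides is that the Sylow $p$-subgroups are elementary abelian of order $q$ and form a TI-set (distinct ones meet trivially), that a point stabiliser (Borel) has shape $E_q\!:\!C_{(q-1)/(2,q-1)}$, that the stabiliser of two points is the cyclic split torus, and that the normalisers of the split and non-split tori are dihedral of orders $\frac{2(q-1)}{(2,q-1)}$ and $\frac{2(q+1)}{(2,q-1)}$. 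These already account for families 1, 2 and 3, as well as the constituents of families 5 and 6.

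Next I would split an arbitrary subgroup $H\le G$ according to whether $p\mid|H|$. If it does, $H$ contains a unipotent element; using the TI-property of the Sylow $p$-subgroups and a Sylow-counting argument one shows that either all such elements fix a common point---so $H$ lies in a single Borel and is of type $E_{p^s}$ or $E_{p^s}\!:\!h$ (families 5 and 6)---or $H$ contains unipotent elements based at distinct points, which forces $H$ to be a subfield subgroup ${\rm L}_2(p^w)$ or ${\rm PGL}_2(p^w)$ (families 10 and 11). Recovering the exact field $\mathbb{F}_{p^w}$ from the trace field generated by the matrix entries of $H$ is the delicate part of Dickson's original argument.

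If instead $p\nmid|H|$, then $H$ is a $p'$-subgroup of ${\rm PGL}_2(\overline{\mathbb{F}}_p)$, and the classical list of finite subgroups of ${\rm PGL}_2(\mathbb{C})$ (the M\"obius/rotation groups), reduced modulo $p$, shows $H$ is cyclic, dihedral, or one of $A_4$, $S_4$, $A_5$ (families 2, 3, 4, 7, 8, 9). For each exceptional type I would determine when it embeds by checking that the required roots of unity and character values lie in $\mathbb{F}_q$, yielding the conditions $q\equiv\pm1\,(8)$ for $S_4$ and $q\equiv\pm1\,(5)$ for $A_5$; the small overlaps where $p$ divides the order of an exceptional group merge it into a subfield subgroup (e.g.\ $A_4\cong{\rm L}_2(3)$, $A_5\cong{\rm L}_2(4)\cong{\rm L}_2(5)$), exactly as the theorem's cross-references indicate.

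The final and most laborious step is the enumeration: counting the subgroups in each family as $|G|$ divided by the order of the relevant normaliser, and counting conjugacy classes by a fusion analysis---namely whether the normaliser inside ${\rm PGL}_2(q)$ strictly contains the one inside $G$. This fusion is precisely what produces the ``$1$ or $2$ classes'' dichotomies governed by the parities of $\frac{r}{w}$ and $\frac{q\pm1}{d(2,q-1)}$ and by $q\bmod 8$. I expect the main obstacle to lie here and in the $p\mid|H|$ case, and I would keep $q$ even and $q$ odd separate throughout, since $(2,q-1)$, the central involution, and which exceptional groups occur all behave differently in the two characteristics.
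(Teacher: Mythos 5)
First, a point of reference: the paper does not prove this theorem at all. It is the classical Moore--Wiman--Dickson classification of subgroups of ${\rm L}_2(q)$, quoted verbatim from Vanden Cruyce's thesis and cited to Dickson's book, so there is no in-paper argument to compare yours against. Your outline is the standard modern route to Dickson's theorem (essentially the treatment in Suzuki or Huppert): element types via fixed points on ${\rm PG}(1,q)$, the TI-property of the Sylow $p$-subgroups, the dichotomy $p\mid |H|$ versus $p\nmid |H|$, reduction of the $p'$-case to the cyclic/dihedral/$A_4$/$S_4$/$A_5$ list, and counting via normalisers and ${\rm PGL}_2(q)$-fusion. As a roadmap it is accurate and I have no objection to the strategy.

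As a proof, however, it has genuine gaps, and you name them yourself without closing them. (i) The case where $H$ contains unipotent elements fixing distinct points ``forces $H$ to be a subfield subgroup'' is the hardest single step of Dickson's argument (one must show that the traces of the elements of $H$ generate a subfield $\mathbb{F}_{p^w}$ and that $H$ is conjugate into ${\rm L}_2(p^w)$ or ${\rm PGL}_2(p^w)$); you flag it as ``the delicate part'' and leave it there. (ii) The assertion that a $p'$-subgroup is on the list of finite subgroups of ${\rm PGL}_2(\mathbb{C})$ ``reduced modulo $p$'' needs an actual argument --- either a lifting theorem for $p'$-subgroups of ${\rm GL}_2(\overline{\mathbb{F}}_p)$ or a direct pole-counting analysis on the projective line; it is not automatic. (iii) The enumeration and the $1$-versus-$2$ conjugacy-class dichotomies (the parities of $\frac{q\pm1}{d(2,q-1)}$ and $\frac{r}{w}$, and $q \bmod 8$ for the Klein four-groups and $S_4$) are precisely the data the paper later relies on (Lemma~\ref{nheqng} and the uniqueness of the ${\rm L}_2(q)$ over a given $E_4$), and your sketch only says such a fusion analysis ``would'' produce them. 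Also note one bookkeeping slip: for $q=4^m$ the subgroups $A_4$ do not arise from your $p\nmid|H|$ branch nor as subfield subgroups ${\rm L}_2(3)$; they are the Borel-type subgroups $E_4\!:\!3$ of family~6, which is why the theorem cross-references them the way it does. In short, the skeleton is right, but the flesh --- exactly the parts Dickson's hundred pages are spent on --- is missing.
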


\subsection{Class III maps for ${\rm L}_2(q^3)$ with $q$ odd.}

Let $p$ be an odd prime and let $q=p^n$ with $n$ a positive integer.
Let $G < {\rm Aut}({\rm L}_2(q^3))\cong {\rm PGL}_2(q^3):C_{3n}$ with  $G \cong {\rm L}_2(q^3)$.
Since $G$ is defined over a field ${\rm GF}(q^3)$, there exist outer automorphisms of $G$, of order 3.
Each of these outer automorphisms centralizes a subfield subgroup isomorphic to ${\rm L}_2(q)$.
We want to find a triple of involutions $(\rho_0, \rho_1, \rho_2)\subset G$ and an outer automorphism $\alpha$ of order 3 such that
\begin{enumerate}
\item $\alpha$ cyclically permutes $\rho_0$, $\rho_2$ and $\rho_0\rho_2$ (and therefore normalizes $\langle \rho_0,\rho_2\rangle$);
\item $\alpha$ fixes $\rho_1$ (see Figure~\ref{classes});
\item $\rho_0$ and $\rho_2$ commute;
\item $\langle \rho_0,\rho_1,\rho_2\rangle = G$;
\item there is no element of ${\rm Aut}(L_2(q^3))$ that swaps $\rho_0$ and $\rho_2$ and fixes $\rho_1$ (that is, the map has no duality).
\end{enumerate}

Point (3) says that $\langle \rho_0,\rho_2\rangle$ is an elementary abelian group  $E_4$ of order 4. Theorem~\ref{sub} tells us that every group ${\rm L}_2(q^3)$ has either one or two conjugacy classes of subgroups isomorphic to $E_4$.
When there are two classes of $E_4$, the classification of the subgroup structure of ${\rm PGL}_2(q^3)$, available for instance in~\cite{COT2006} or~\cite{Moo1904}, shows that the classes are fused under the action of ${\rm PGL}_2(q^3)$. Hence, up to isomorphism, there is a unique choice of a subgroup $E_4$ in $G$.
So we can pick an arbitrary subgroup $E_4$ in $G$, meaning we choose $\rho_0$, $\rho_2$ (and $\rho_0\rho_2$). 

Once the subgroup $E_4$ is chosen, in order to have point (1), we look for an element $\alpha$ of order 3 in ${\rm Aut}(L_2(q^3))\setminus G$ such that $\alpha$ is permuting the three involutions of $E_4$. 
Since $\alpha$ is of order three and not in $G$, it cannot be in ${\rm PGL}(2,q^3)$ and hence it must be a field automorphism. From Theorem~\ref{pslsub}, we readily see that the normalizer $N_{{\rm PGL}_2(q^3))}(E_4) \cong S_4$.
Therefore $N_{{\rm Aut}({\rm L}_2(q^3))}(E_4) \cong S_4 : C_{3n}$ where $C_{3n}$ is a subgroup of ${\rm Aut}(L_2(q^3)) \cong {\rm PGL}_2(q^3):C_{3n}$ intersecting $G$ in the identity element only.
Thus an element $\alpha$ as above must exist.
%klara: reference or argument missing. 
We pick $\alpha$ in a subgroup $C_{3n}$ that normalizes $E_4$ but does not centralize it.

In order to have point (2), we need to find an involution $\rho_1\in G$ such that $\rho_1^\alpha = \rho_1$, or equivalently $\alpha^{\rho_1} = \alpha$. So $\rho_1\in C_G(\alpha)$.  
As $\alpha$ is a field automorphism of order 3, it fixes a subfield $GF(q)$ of $GF(q^3)$ and therefore the centralizer of $\alpha$ in $G$ is a subgroup $H\cong {\rm L}_2(q)$. So every involution of $H$ is a candidate for $\rho_1$.
%klara: missing reference or argument.

Now let us focus on point (4).
No involution $\rho_1$ of $H$ commutes with $\rho_0$ as if it did, $\rho_1^\alpha = \rho_1$ would also commute with $\rho_0^\alpha = \rho_2$ and therefore, $\langle \rho_0,\rho_2,\rho_1\rangle \cong E_8$ but $G$ does not have such subgroups by Theorem~\ref{sub}.
So every involution $\rho_1$ of $H$ gives us a triple $(\rho_0, \rho_1, \rho_2)$ of involutions, two of which commute, namely $\rho_0$ and $\rho_2$ and such that $\rho_1$ does not commute with $\rho_0$ nor with $\rho_2$.
The subgroup $\langle \rho_0,\rho_1,\rho_2\rangle$ contains an elementary abelian subgroup of order 4 and two distinct dihedral subgroups, namely $\langle \rho_0,\rho_1\rangle$ and $\langle \rho_1,\rho_2\rangle$ that are isomorphic (as $\langle \rho_0,\rho_1\rangle^\alpha = \langle \rho_1,\rho_2\rangle$).
Looking in the list of subgroups of ${\rm L}_2(q^3)$ of Theorem~\ref{sub} for such subgroups, we readily see that types (1), (2), (5) and (6) do not have elementary abelian subgroups of order 4. 
Suppose $\langle\rho_0,\rho_1,\rho_2\rangle$ is of type (3). Then it is a dihedral group and in that case, as $\rho_0$ and $\rho_2$ commute, one of them must be in the centre of the dihedral group. Then it commutes also with $\rho_1$. 
But then, $\rho_2 = \rho_0^\alpha$ commutes also with $\rho_1$ and the group $\langle\rho_0,\rho_1,\rho_2\rangle$ has to be elementary abelian. Hence it has to be isomorphic to $E_4$ and $\rho_1 = \rho_0\rho_2$. But then $\rho_1$ is not fixed by $\alpha$, a contradiction.
This last argument also excludes type (4) of Theorem~\ref{sub}.
The set of all involutions of a group $A_4$ generates a subgroup of order 4, excluding type (7) of Theorem~\ref{sub}.
Case (8) gives subgroups isomorphic to $S_4$. If $S_4$ has a map of Class III, then it must have three subgroups $D_{2n}$ for a certain $n$ (that can be either $3$ or $4$ in the case of $S_4$) that contain a same subgroup of order 2. Looking at the subgroup lattice of $S_4$ (see for instance~\cite[Page 23]{BDL96}), we readily see that this is not possible.
%A quick {\sc Magma}~\cite{Magma} computation shows that no map of $S_4$ admits trialities, which would be the case here because of the choice of $\alpha$. So this case cannot occur. The same argument holds for subgroups of 
Case (9) gives subgroups that are isomorphic to $A_5$. 
If $A_5$ has a map of Class III, then it must have three subgroups $D_{2n}$ for a certain $n$ (that can be either $3$ or $5$ in the case of $A_5$) that contain a same subgroup of order 2. Looking at the subgroup lattice of $A_5$ (see for instance~\cite[Page 26]{BDL96}), we readily see that this is not possible.
We are thus left with cases (10) and (11).
In other words, what could still happen is that ${\rm L}_2(q') \leq \langle \rho_0,\rho_2,\rho_1\rangle  < G$ with $q'$ a proper divisor of $q$.

Now, $\alpha$ centralizes $\rho_1$ and normalizes $E_4$.
We need a little lemma about normalizers of elementary abelian subgroups of order $4$ in ${\rm L}_2(q^3)$.
\begin{lemma}\label{nheqng}
Let $G= {\rm L}_2(q^3)$ and $K<G$ be isomorphic to ${\rm L}_2(q)$.
Let $E_4$ be an elementary abelian subgroup of order $4$ of $K$.
Then $N_K(E_4) = N_G(E_4)$. Moreover $N_G(E_4)\cong A_4$ or $S_4$. 
\end{lemma}
\begin{proof}
By Theorem~\ref{sub}, case (4), 
the number \sout{$n_G$} of elementary abelian subgroups of order 4 of $G$ is equal to 
$\frac{q^3(q^6 -1)}{12 (2,q^3-1)}$,
the number \sout{$n_H$} of elementary abelian subgroups of order 4 of $K$ is equal to 
$\frac{q(q^2-1)}{12 (2,q-1)}$, and if $q \equiv \pm 1 (
8$), then $q^3\equiv \pm 1 (
8)$ as well while if $q \equiv \pm 3 (
8$), then $q^3\equiv \pm 3 (
8)$. In other words, the number of conjugacy classes of subgroups $E_4$ in $K$ is the same as the number of conjugacy classes of subgroups in $G$ and in both cases, $|N_K(E_4)| = |N_G(E_4)|$, implying that $N_K(E_4) = N_G(E_4)$.
The fact that $N_K(E_4) \cong A_4$ or $S_4$ directly follows from the size of the conjugacy classes of subgroups isomorphic to $E_4$ given in case (4) of Theorem~\ref{sub}.
\end{proof}

Suppose first that $q$ is not a square (this makes groups of type (11) in Theorem~\ref{sub} non-existent).
By Lemma~\ref{nheqng}, if $K < G$ is a subgroup isomorphic to ${\rm L}_2(q)$, then $N_G(E_4) = N_K(E_4) \cong A_4$ or $S_4$. Let $x := |N_G(E_4)|$.
In both cases, the number of subgroups conjugate to $K$ in $G$ is $\frac{(q^3+1)q^3(q^3-1)}{(q+1)q(q-1)}$, the number of subgroups conjugate to $E_4$ in $G$ is $\frac{(q^3+1)q^3(q^3-1)}{2x}$, the number of subgroups conjugate to $E_4$ in $K$ is $\frac{(q+1)q(q-1)}{2x}$ and therefore, the number of subgroups conjugate to $K$ and containing $E_4$ is $\frac{(q^3+1)q^3(q^3-1)}{(q+1)q(q-1)}\cdot \frac{(q+1)q(q-1)}{2x} / \frac{(q^3+1)q^3(q^3-1)}{2x} = 1$.

Suppose next that $q$ is a square.
In that case, $G$ has subgroups of type (11) in Theorem~\ref{sub}. The subgroups isomorphic to ${\rm L}_2(q')$ with $q'$ not a square then have two conjugacy classes of subgroups $E_4$ that are fused in the subgroups ${\rm PGL}_2(q')$, hence the argument on the sizes of the normalizers in $G$ and in $H$ remains valid.
Therefore, there is a unique subgroup isomorphic to ${\rm L}_2(q)$ in ${\rm L}_2(q^3)$ that contains a given subgroup $E_4$.

%We just showed that every subgroup $E_4$ is contained in a unique subgroup isomorphic to ${\rm L}_2(q)$ of $G$.
Now, suppose that for every involution $\rho_1$ of $H$, the subgroup $\langle \rho_0,\rho_1,\rho_2\rangle$ is a proper subgroup of $G$.
Then $\langle \rho_0,\rho_1,\rho_2\rangle^\alpha = \langle \rho_0^\alpha,\rho_1^\alpha,\rho_2^\alpha\rangle= \langle \rho_2,\rho_1,\rho_0\rho_2\rangle=\langle \rho_0,\rho_1,\rho_2\rangle$, hence all these proper subgroups have to be contained in the unique subgroup isomorphic to ${\rm L}_2(q)$ which contains $E_4$.
So all the involutions of $H$ are in the same subgroup isomorphic to ${\rm L}_2(q)$ that contains $E_4$, meaning that $E_4$ is also in $H$. This is impossible as $\alpha$ permutes the involutions of $E_4$ and centralises $H$, a contradiction.
Hence, there must be at least one involution, say $\rho_1$ such that $\langle \rho_0,\rho_1,\rho_2\rangle = G$, proving that there exists a $\rho_1$ for which (4) holds.

%Continuing further the argument above, it may happen in the case that $q$ is a square that the group generated by $\rho_0$, $\rho_1$ and $\rho_2$ is a subgroup isomorphic to a ${\rm PGL}_2(q')$. But again, this would imply, if all groups generated by $\rho_0$, $\rho_1$ and $\rho_2$ with $\rho_1$ in $H$, that $E_4< H$, a contradiction.

%In summary, we showed that, when $q$ is odd, there must be at least one involution $\rho_1\in H$ such that $\langle \rho_0,\rho_1,\rho_2\rangle = G$. 
Now suppose we have a triple $(\rho_0,\rho_1,\rho_2)$ that satisfies (1) to (4) (such triples must exist by the discussion above).
It remains to show that (5) holds.
The triple $(\rho_0,\rho_1,\rho_2)$ gives a reflexive map $\mathcal M$ with a triality (given by $\alpha$). So this map is either of class III or IV, the latter happening if and only if $\mathcal M$ admits a duality. 
Suppose that $\mathcal M$ also has a duality. Then this duality, say $D$, would have to fix $\rho_1$ and swap $\rho_0$ with $\rho_2$ (see Figure~\ref{classes}).
Also, the Petrie operator $P$ would be such that $DP = \alpha$ or $\alpha^{-1}$. As $\alpha$ is in the outer automorphism group of $G$, one of $D$ and $P$ at least must also be in the outer automorphism group. 
But the outer automorphism group ${\rm Out}(G) \cong C_2\times C_n$ (where $q^3=p^n$) does not contain a subgroup isomorphic to $S_3$. So the map $\mathcal M$ cannot have dualities and trialities at the same time.
We thus have proven the following theorem.

\begin{theorem}\label{maintheo}
Let $q$ be a power of a prime. There exist involutions $\rho_0$, $\rho_1$ and $\rho_2$ in ${\rm L}_2(q^3)$ such that $\rho_0$ and $\rho_2$ commute, $\langle \rho_0, \rho_1, \rho_2 \rangle = {\rm L}_2(q^3)$ and the map generated by $\rho_0$, $\rho_1$ and $\rho_2$ is a map of class III.
\end{theorem}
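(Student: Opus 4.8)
The plan is to split according to the parity of $q$. When $q$ is even the statement is already due to Jones and Poulton~\cite[Theorem 2.3]{JP2010}, who give explicit generating involutions of ${\rm L}_2(2^{3n})$ yielding a reflexible map of Class III, so I would simply cite their result and devote the work to the odd case.

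For $q$ odd I would realise $G\cong{\rm L}_2(q^3)$ inside ${\rm Aut}({\rm L}_2(q^3))\cong{\rm PGL}_2(q^3):C_{3n}$ and build the generators by hand. First, using Theorem~\ref{sub}(4) and the fact that the (at most two) classes of Klein four-subgroups fuse in ${\rm PGL}_2(q^3)$, I would fix an $E_4\le G$, which is unique up to conjugacy and provides the commuting pair $\rho_0,\rho_2$, so point~(3) holds. Next I would select an order-$3$ element $\alpha$ in a complement $C_{3n}$ that normalises but does not centralise $E_4$; since $N_{{\rm Aut}({\rm L}_2(q^3))}(E_4)\cong S_4:C_{3n}$ such an $\alpha$ exists and cyclically permutes the involutions of $E_4$, giving point~(1). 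Finally I would take $\rho_1$ to be any involution of $H:=C_G(\alpha)\cong{\rm L}_2(q)$, which forces $\rho_1^\alpha=\rho_1$ and hence point~(2); a short argument (otherwise $\langle\rho_0,\rho_2,\rho_1\rangle$ would be an $E_8$, forbidden by Theorem~\ref{sub}) shows $\rho_1$ commutes with neither $\rho_0$ nor $\rho_2$, so $\langle\rho_0,\rho_1\rangle$ and $\langle\rho_1,\rho_2\rangle$ are genuinely dihedral and swapped by $\alpha$.

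The crux is point~(4): that for at least one such $\rho_1$ we get $\langle\rho_0,\rho_1,\rho_2\rangle=G$. Scanning Theorem~\ref{sub}, I would eliminate all subgroup types except (10) and (11) by checking which can contain an $E_4$ together with two isomorphic dihedral subgroups sharing the involution $\rho_1$; the cases $A_4$, $S_4$, $A_5$ fall by their subgroup lattices. The remaining danger is that \emph{every} $\rho_1$ lands in a proper ${\rm L}_2(q')\le G$. Here I would invoke Lemma~\ref{nheqng}: the normaliser equality $N_K(E_4)=N_G(E_4)$ feeds a counting argument showing that exactly one subgroup $L\cong{\rm L}_2(q)$ contains the fixed $E_4$. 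If every choice of $\rho_1$ gave a proper subgroup $\langle\rho_0,\rho_1,\rho_2\rangle$, then, being $\alpha$-invariant and containing $E_4$, all of these would lie in $L$; since ${\rm L}_2(q)$ is generated by its involutions this would force $H\le L$, hence $H=L$ and $E_4\le H$, contradicting that $\alpha$ centralises $H$ while permuting the involutions of $E_4$ nontrivially. I expect this to be the main obstacle, as it leans on the delicate normaliser count of Lemma~\ref{nheqng} and a separate treatment of whether $q$ is a square, in order to control the type-(11) subgroups ${\rm PGL}_2(p^w)$.

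It then remains to verify point~(5). The triple $(\rho_0,\rho_1,\rho_2)$ already carries the triality induced by $\alpha$, so the associated reflexible map $\mathcal M$ is of Class III or IV, and it is of Class IV exactly when it also admits a duality $D$ fixing $\rho_1$ and swapping $\rho_0,\rho_2$ (Figure~\ref{classes}). Were such a $D$ to exist, the Petrie operator would satisfy $DP=\alpha^{\pm1}$, so $D$ and $\alpha$ would generate the Wilson group $\Sigma\cong S_3$; as $\alpha$ is an outer automorphism of order $3$, this would exhibit a copy of $S_3$ inside the abelian group ${\rm Out}(G)\cong C_2\times C_{3n}$, which is impossible. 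Hence $\mathcal M$ has no duality and is of Class III, completing the proof.
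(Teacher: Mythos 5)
Your proposal is correct and mirrors the paper's own proof essentially step for step: citing Jones--Poulton for even $q$, then for odd $q$ fixing a conjugacy-unique $E_4$, choosing $\alpha$ in a normalising $C_{3n}$, taking $\rho_1$ in $C_G(\alpha)\cong{\rm L}_2(q)$, eliminating proper subgroup types via Theorem~\ref{sub} and Lemma~\ref{nheqng}, and excluding dualities because ${\rm Out}(G)$ contains no $S_3$. Your spelling-out of why $H=L$ (via $H$ being generated by its involutions) is a slightly more explicit rendering of the paper's final contradiction, but the argument is the same.
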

The case when $q$ is even was proven by Jones and Poulton~\cite[Theorem 2.3]{JP2010} and the discussion above settles the case when $q$ is odd.
\begin{remark}\label{out}
The argument used to prove that the maps of Class III for ${\rm L}_2(q^3)$ do not have dualities is that the outer automorphism group contains a cyclic group of order three but no subgroup isomorphic to $S_3$. Other families of groups having this property exist and are therefore likely to also have Class III maps.
\end{remark}

\section{The maps of class III for ${\rm L}_2(q^3)$ - polynomial approach}

\begin{comment}
In~\cite[Theorem 3.2]{JP2010}, maps of class III are constructed for some groups ${\rm L}_2(q)$ with $q$ even.

\begin{theorem}~\cite{JP2010}
For each positive integer $e$ divisible by three, there is a regular map in class III with automorphism group ${\rm L}_2(2^e)$.
\end{theorem}
\end{comment}
%Computer experiments using {\sc Magma} gave us maps of class III for ${\rm L}_2(q)$ with $q=3^3, 5^3, 7^3, 9^3, 11^3, 13^3, 17^3, 19^3, 23^3$. This suggested to generalise the Theorem of Jones and Poulton as we did in the previous section.

The proof of Theorem~\ref{maintheo} is not constructive. We give in this section a series of equations that, if satisfied, give a map of class III for ${\rm L}_2(q^3)$ for a fixed prime power $q$.

Let $q = p^n$ for some prime $p$ and positive integer $n$.
Assume the field automorphism providing Wilson's triality is
\[\tau : \mathbb{F}_{q^3}\rightarrow \mathbb{F}_{q^3}: x \mapsto x^{q}.\]

The group ${\rm L}_2(q^3)$ is the quotient ${\rm SL}(2,q^3)/Z({\rm SL}(2,q^3))$.
Let $\theta$ be the application from ${\rm SL}(2,q^3)$ to ${\rm L}_2(q^3)$ that associates to a given 2 by 2 matrix of ${\rm SL}(2,q^3)$ the corresponding element of ${\rm L}_2(q^3)$ obtained by this quotient. In other words,

$\theta : {\rm SL}(2,q^3) \rightarrow {\rm L}_2(q^3) : g \mapsto \theta(g)$ such that if $g=\left(\begin{array}{cc}a&b\\c&d\end{array}\right)$, then $$\theta(g) : {\rm PG}(1,q^3)\rightarrow {\rm PG}(1,q^3) : x \mapsto \frac{ax+b}{cx+d}.$$

We want to find three matrices $R_0$, $R_1$ and $R_2 \in {\rm SL}(2,q^3)$ such that $\theta(R_0)$, $\theta(R_1)$ and $\theta(R_2)$ give us a reflexible map of class III for ${\rm L}_2(q^3)$.

As we have a unique conjugacy class of involutions in ${\rm L}_2(q^3)$ and all elements of order 4 in $C_{SL(2,q)}(\alpha)$ are conjugate, we can take $R_1 = \left(\begin{array}{cc}0&1\\-1&0\end{array}\right)$. 
Then, in order to have $R_0$ mapped on an involution by $\theta$, we need $tr(R_0) = 0$.
This gives $R_0 = \left(\begin{array}{cc}a&b\\c&-a\end{array}\right)$
and a first equation to ensure $R_0\in SL(2,q^3)$.
\begin{equation}\label{eq1}
    -a^2-bc = 1
\end{equation}

We want $\tau$ to map $R_0$ to $R_2$.
Hence $$R_2 = \left(\begin{array}{cc}a^{q}&b^{q}\\c^{q}&-a^{q}\end{array}\right),$$ and $$R_0R_2 = \left(\begin{array}{cc}a^{q+1}+bc^{q}&ab^{q}-ba^{q}\\ca^{q}-ac^{q}&cb^{q}+a^{q+1}\end{array}\right)$$.
We want $R_0$ and $R_2$ to commute, and so we impose 
\begin{equation}\label{eq3}
tr(R_0R_2) = a^{q+1}+bc^{q}+cb^{q}+a^{q+1} = 0
\end{equation}
Finally, we want $\tau$ to map $R_2$ onto $R_0R_2$.
Hence we get the following matrix equation.
\begin{equation}\label{eq4}
\left(\begin{array}{cc}a^{q^2}&b^{q^2}\\c^{q^2}&-a^{q^2}\end{array}\right) = \left(\begin{array}{cc}a^{q+1}+bc^{q}&ab^{q}-ba^{q}\\ca^{q}-ac^{q}&cb^{q}+a^{q+1}\end{array}\right)
\end{equation}
Observe that if Equation~\ref{eq4} is satisfied, then Equation~\ref{eq3} is also satisfied as the trace of Equation~\ref{eq4} is exactly Equation~\ref{eq3}.

In order for $\langle \theta(R_0),\theta(R_1),\theta(R_2)\rangle$ to generate ${\rm L}_2(q^3)$ we would like $R_0R_1$ to have order large enough. In particular, we would expect $R_0R_1=\left(\begin{array}{cc}-b&a\\a&c\end{array}\right)$ to be of order either $q^3-1$ or $q^3+1$. Therefore we get another equation where $\xi$ is a $(q^3\pm 1)$-root of unity in $GF(q^6)$.
\begin{equation}\label{eq2}
    -b+c = \xi + \xi^{-1}
\end{equation}
Observe that, in order to get elements of order $q^3+1$, one would need to find $\xi \in GF((q^3)^2)$ such that $-b+c = \xi+\xi^{-1}.
$

\begin{lemma}
If $R_0$, $R_1$ and $R_2$ satisfy Equations \ref{eq1}- \ref{eq2} above, then  $$\langle \theta(R_0), \theta(R_1), \theta(R_2) \rangle \cong L_2(q^3).$$
\end{lemma}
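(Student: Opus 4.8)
The plan is to prove $H := \langle \theta(R_0),\theta(R_1),\theta(R_2)\rangle = G := {\rm L}_2(q^3)$ by showing that $H$ coincides with none of the proper subgroups enumerated in Theorem~\ref{sub}. First I would extract the structural consequences of the equations. Equation~\ref{eq1} puts $R_0$ in ${\rm SL}(2,q^3)$ with trace $0$, so $\theta(R_0)$, and likewise $\theta(R_1)$ and $\theta(R_2)$, are involutions. Since $\tau\colon x\mapsto x^{q}$ is a field automorphism of order $3$ fixing $R_1$, and Equations~\ref{eq3}--\ref{eq4} say precisely that $\tau$ sends $\theta(R_0)\mapsto\theta(R_2)\mapsto\theta(R_0)\theta(R_2)\mapsto\theta(R_0)$, the involutions $\theta(R_0)$ and $\theta(R_2)$ commute and generate a Klein four-group $E_4\leq H$ that is cyclically permuted by the triality induced by $\tau$. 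Finally, taking $\xi$ of multiplicative order $q^3\pm1$ in Equation~\ref{eq2} forces the eigenvalues of $R_0R_1$ to be $\xi,\xi^{-1}$, so $\theta(R_0R_1)\in H$ has order $m:=(q^3\pm1)/(2,q^3-1)$; note that $m>5$ for every prime power $q$ and that $\gcd(m,p)=1$.

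With these two invariants in hand --- a subgroup $E_4$ and an element of order $m$ --- I would eliminate the families of Theorem~\ref{sub} one by one. The elementary abelian families (1) and (5) have exponent $p$, and the cyclic family (2) is abelian, so none can contain the element $\theta(R_0R_1)$ of order $m$ coprime to $p$ together with the non-commuting pair $\theta(R_0),\theta(R_1)$; the same element of order $m>5$ rules out the bounded groups $A_4$, $S_4$ and $A_5$ of families (7)--(9), whose elements have order at most $5$. For family (6), including the point stabiliser, the dihedral subgroup $\langle\theta(R_0),\theta(R_1)\rangle$ of order $2m$ inverts a torus element of order $m$ and is therefore not contained in any Borel subgroup, which excludes this family; alternatively, as observed in the proof of Theorem~\ref{maintheo}, these groups contain no $E_4$.

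The dihedral families (3) and (4) are where I expect the real work, since a dihedral group of order $q^3\pm1$ does contain both an $E_4$ and an element of order $m$, so neither invariant alone suffices. Here I would reuse the triality argument from the proof of Theorem~\ref{maintheo}: if $H$ were dihedral, then one of the three involutions of $E_4=\langle\theta(R_0),\theta(R_2)\rangle$ would be the central involution and hence commute with $\theta(R_1)$; applying the $\tau$-triality, which fixes $\theta(R_1)$ and permutes the three involutions of $E_4$, forces the other two also to commute with $\theta(R_1)$, so $H$ would be elementary abelian, contradicting that $\tau$ does not fix $\theta(R_0)\theta(R_2)$. It remains to treat the subfield families (10) and (11): every element of ${\rm L}_2(p^w)$ or ${\rm PGL}_2(p^w)$ with $w$ a proper divisor of $3n$ (respectively $2w\mid 3n$) has order at most $p^{w}+1\leq p^{3n/2}+1$, which is strictly smaller than $m$ because $(p^{3n}-1)/2>p^{3n/2}+1$ whenever $p^{3n/2}>3$. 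Thus $\theta(R_0R_1)$ lies in no proper subfield subgroup. Having excluded every proper type, $H=G={\rm L}_2(q^3)$.

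The main obstacle is the dihedral case, and the ingredient that resolves it is that the triality $\tau$ fixes $\theta(R_1)$: this is exactly what Equations~\ref{eq3}--\ref{eq4} encode, so the hypotheses of the lemma are used in full. I would also be careful about two routine but load-bearing points: the factor $(2,q^3-1)$ incurred in passing from ${\rm SL}$ to ${\rm L}_2$ when computing the order of $\theta(R_0R_1)$, and the numeric inequality $m>p^{3n/2}+1$ bounding the subfield subgroups, which holds with room to spare since the largest proper divisor $w$ of $3n$ satisfies $w\leq 3n/2$.
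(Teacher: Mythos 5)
Your proof is correct in substance but follows a genuinely different route from the paper's. The paper's argument is much shorter: it observes that Equation~\ref{eq2} makes each of $\langle\theta(R_0),\theta(R_1)\rangle$, $\langle\theta(R_2),\theta(R_1)\rangle$ and $\langle\theta(R_0R_2),\theta(R_1)\rangle$ a \emph{maximal} dihedral subgroup $D_{q^3\pm1}$ of ${\rm L}_2(q^3)$, so that $H$ is either that maximal subgroup or the whole group; in the former case one of the three involutions of $E_4=\langle\theta(R_0),\theta(R_2)\rangle$ would be the central involution of the dihedral group, forcing one of the three rank-two subgroups above to be cyclic rather than dihedral --- essentially the same central-involution-plus-triality contradiction you use, invoked just once. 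You instead sweep through the entire subgroup list of Theorem~\ref{sub}, using the element $\theta(R_0R_1)$ of order $m$ to eliminate most families and reserving the triality argument for the dihedral ones. Your version is longer but buys independence from the maximality of $D_{q^3\pm1}$, a fact from Dickson's classification that has small exceptions (none occurring for cube field sizes --- a point the paper leaves implicit), and it makes visible exactly which hypothesis kills which family. Three small repairs to your write-up: the fallback ``these groups contain no $E_4$'' for family (6) fails when $p=2$, so your primary Borel/torus-normalizer argument is the one to keep there; your inequality $(p^{3n}-1)/2>p^{3n/2}+1$ excludes $q=2$, where one should instead note that the only proper divisor of $3$ is $1$ and ${\rm L}_2(2)\cong S_3$ has no element of order $7$ or $9$; and the contradiction closing your dihedral case is cleaner if stated as: once all three involutions of $E_4$ commute with $\theta(R_1)$, the element $\theta(R_0)\theta(R_1)$ has order at most $2$, against its order $m\geq 7$.
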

\begin{proof}
If Equation~\ref{eq2} is satisfied, then we know that, in ${\rm L}_2(q)$, the groups $$\langle \theta(R_0), \theta(R_1) \rangle \cong \langle \theta(R_2), \theta(R_1) \rangle \cong \langle \theta(R_0R_2), \theta(R_1) \rangle \cong D_{q^3\pm 1}$$ are maximal subgroups of ${\rm L}_2(q)$. Hence there are only two possibilities for $H := \langle \theta(R_0), \theta(R_1), \theta(R_2) \rangle$, namely $H\cong D_{q\pm 1}$ or $H\cong {\rm L}_2(q^3)$. If the former happens, we have that $\langle \theta(R_0), \theta(R_2) \rangle < \langle \theta(R_0), \theta(R_1) \rangle$, meaning that one of $\theta(R_0)$, $\theta(R_2)$ and $\theta(R_0R_2)$ must be the central involution of $D_{q\pm 1}$. But then one of $\langle \theta(R_0), \theta(R_1)\rangle$, $\langle \theta(R_2), \theta(R_1) \rangle$ or $\langle \theta(R_0R_2), \theta(R_1) \rangle$ must be a cyclic group, not a dihedral group, a contradiction.
\end{proof}

Equation~\ref{eq2} is there to ensure by the above Lemma that the group generated is indeed ${\rm L}_2(q^3)$ but experiments in {\sc Magma} shows that there should be other possible $n$th-roots of unity that can be used than $q^3-1$ and $q^3+1$.

%From the above equations, we can deduce the following.

%\begin{equation}\label{e1}
%    b^{q-1} = (a^{q^2}+a^{q+1})/(a^2+1)
%\end{equation}
%\begin{equation}\label{e2}
%    c = -(a^2+1)/b
%\end{equation}
%\begin{equation}\label{e3}
%    -b+c = \xi + \xi^{-1}
%\end{equation}
%where $\xi$ is a $(q\pm 1)$-root of unity in $GF(q^6)$.

%If $(a,b)$ satisfies Equation~\ref{e1}, then $(a,b i^{k(q^3-1)/(q-1)})$ also satisfies Equation~\ref{e1} for each $k=0, \ldots, q-2$.

\begin{comment}
\begin{center}
\begin{tabular}{||c|c|c|c|c|c||}
\hline
$q$&$q^3$&$\#$ Sols eq 5&Sols all eqs except trace&Numbers covered by $-b+c$&Numbers covered by $\xi + \xi^{-1}$\\
\hline
3&27&38&24&4&12\\
5&125&220&120&6&30 (48)\\
7&343&636&336&8&54 (138)\\
9&729&1352&720&10&144 (288)\\
11&1331&2530&1320&12&216 (432)\\
13&2197&4164&2184&14&360 (828)\\
17&4913&9616&4896&18&1124 (1872)\\
19&6859&13374&6840&20&1134 (2310)\\
\hline
\end{tabular}
\end{center}
From the computations it is clear that triples $(a,b,c)$ satisfying Equations~\ref{e1} and~\ref{e2} do not necessarily satisfy all equations except the constraint on the trace (Equation~\ref{e3}).

At least column 5 seems predictable (always equal to q+1?).

SO what I said in the last meeting (about -b+c and the $\xi$'s covering the whole $GF(q^3)$) is FALSE :-(

More computations: 
\end{comment}

Table~\ref{nsol} gives the number of solutions $(a,b,c)$ that give a map with triality and no duality of type $\{(q^3-1)/2,(q^3-1)/2\}_{(q^3-1)/2}$ for column 3 and a map of type $\{(q^3+1)/2,(q^3+1)/2\}_{(q^3+1)/2}$ for column 4.

\begin{table}
\begin{center}
\begin{tabular}{||c|c||c||c||}
\hline
$q$&$q^3$&
$\#$ Sols for $q^3-1$&
$\#$ Sols for $q^3+1$\\
\hline
3 & 27 & 0 & 24\\
4 & 64 & 12 & 12\\
5 & 125 & 48 & 36\\
7 & 343 & 96 & 96\\
8 & 512 & 162 & 108\\
9 & 729 & 192 & 192\\
11 & 1331 & 324 & 216\\
13 & 2197 & 432 & 576\\
16 & 4096 & 840 & 1680\\
17 & 4913 & 1440 & 624\\
19 & 6859 & 1020 & 1440\\
23 & 12167 & 2952 & 2160\\
\hline
\end{tabular}
\end{center}
\caption{Number of solutions}\label{nsol}
\end{table}

%Finding all solutions to equations~\ref{eq1} to~\ref{eq2} seems a very difficult task. Proving that these equations always admit at least one solution would already be enough to show that all groups ${\rm L}_2(q^2)$ are automorphism groups of at least one class III map. But even that seems very difficult.
%That said, asking a computational software like {\sc Magma}~\cite{magma} to find (all) solutions,to equations~\ref{eq1} to~\ref{eq4} is doable in a reasonable time for relatively small values of $q^3$.

\section{Wilson's triality and incidence geometry}\label{wilson}

Let $\mathcal M$ be a reflexible map of type $\{p,q\}_r$.
This map has faces that are $p$-gons, Petrie polygons that are $r$-gons and around a vertex, there are $q$ edges and $q$ faces forming a $q$-gon.
The map $\mathcal M$ has an automorphism group $G$ generated by three involutions $\rho_0$, $\rho_1$, $\rho_2$ such that 
$\rho_0\rho_1$ is of order $p$, $\rho_1\rho_2$ is of order $q$, $\rho_0\rho_2$ is of order 2 and $\rho_0\rho_2\rho_1$ is of order $r$ as explained in Section~\ref{definitions}. 

A natural coset geometry of rank four can be constructed from $\mathcal M$. 

\begin{construction}\label{cons}
Let $\mathcal M$ be a reflexible map with generators $\rho_0$, $\rho_1$ and $\rho_2$ stabilizing a chosen flag $F:=\{v,e,f\}$. 
Let $\Gamma(\mathcal M) := \Gamma(G;\{G_0,G_1,G_2,G_3\})$ be a rank four coset pregeometry where
\begin{itemize}
\item $G_0 := \langle \rho_0,\rho_1\rangle$ is the stabilizer of $f$.
\item $G_1 := \langle \rho_0,\rho_2\rangle$ is the stabilizer of $e$.
\item $G_2 := \langle \rho_1,\rho_2\rangle$ is the stabilizer of $v$.
\item $G_3 := \langle \rho_1,\rho_0\rho_2\rangle$ (it is the stabilizer of a Petrie polygon $p$ such that $v\in p$, $e\subset p$ and $f$ and $p$ have two edges in common).
\end{itemize}
\end{construction}
This coset pregeometry can also be constructed as a pregeometry where the elements are respectively the vertices, edges, faces and Petrie polygons of the map $\mathcal M$. 
Two elements of distinct type other than edges are incident if and only if there are two flags in the map containing both, or in other words, if there are exactly two edges incident to both of them. 
Therefore, a vertex is incident to a face or a Petrie polygon if and only if it is on the face or the Petrie polygon, while a face is incident to a Petrie polygon if and only if they have two edges in common. 
The incidence between edges and other elements is the one inherited from the map.

The following lemma is obvious. In order to prove it, we only need to prove that every flag of $\Gamma(\mathcal M)$ is contained in a chamber, which is straightforward.
\begin{lemma}
The coset pregeometry $\Gamma(\mathcal M)$ obtained from Construction~\ref{cons} is a geometry.
\end{lemma}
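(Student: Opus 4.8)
The plan is to establish the only thing that is not immediate from the definitions, namely that every flag of $\Gamma(\mathcal M)$ is contained in a chamber (a flag of type $I=\{0,1,2,3\}$). First I would record the structural facts that do the work. The group $G=\mathrm{Aut}(\mathcal M)$ acts on $\Gamma(\mathcal M)$ by left translation; this action preserves types and incidence and is transitive on the cosets of each $G_i$, i.e. on the elements of each type. Moreover the four base cosets $G_0,G_1,G_2,G_3$ all contain the identity, so $\{G_0,G_1,G_2,G_3\}$ is a chamber $C_0$, and for every $g\in G$ the translate $gC_0=\{gG_0,gG_1,gG_2,gG_3\}$ is again a chamber, its four members sharing the element $g$. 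Hence chambers are plentiful and it remains only to fit an arbitrary flag inside one.

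Next I would reduce to putting elements into base position. Using left translation I may assume a flag contains the base coset of at least one of its types, and the coset-rewriting trick already available for rank-two flags (writing $g_i^{-1}g_j=ab$ with $a\in G_i$ and $b\in G_j$, then translating by a suitable element of $G_i$) lets me move two members of the flag into base position. For the face--edge--vertex types $\{0,1,2\}$ this identifies the rank-three flags of $\Gamma(\mathcal M)$ with the flags $\{v,e,f\}$ of the map $\mathcal M$ itself, on which $G$ is transitive by reflexibility; the completion to a chamber is then the adjunction of the Petrie polygon $p$ with $v\in p$, $e\subset p$ and $f,p$ sharing exactly two edges, which is precisely the element $G_3$ described in Construction~\ref{cons}. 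Thus the flags not involving the Petrie-polygon type are handled by the regular action on map flags.

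The remaining work, and the main obstacle, is to complete the flags whose type set involves the Petrie type $3$. After moving two members into base position, completing a flag of type $\{i,j,k\}$ to a chamber reduces to the factorization identity $(G_i\cap G_j)G_k=G_iG_k\cap G_jG_k$, which makes the reduced flag $G$-equivalent to the base flag $\{G_i,G_j,G_k\}\subset C_0$; this is exactly the flag-transitivity condition for the coset incidence system and is \emph{not} automatic. I would verify it directly from the explicit presentation, using $G_1=\langle\rho_0,\rho_2\rangle$, $G_3=\langle\rho_1,\rho_0\rho_2\rangle$ together with the relation $\rho_0\rho_2=\rho_2\rho_0$, or equivalently read it off the zigzag geometry of the Petrie polygons: an edge together with a vertex and a Petrie polygon through it always lies on a face sharing two consecutive edges with that polygon, and symmetrically for the other incidences with type $3$. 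Combining these completions with the translation reduction shows that every flag of $\Gamma(\mathcal M)$ lies in a chamber, which is the definition of a geometry.
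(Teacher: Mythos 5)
Your setup is fine (the base cosets $G_0,G_1,G_2,G_3$ all contain the identity and hence form a chamber, translation reduces any flag to one containing base cosets, and the flags of type $\{0,1,2\}$ are exactly the map flags, a single orbit by reflexibility, each completed by the Petrie polygon of Construction~\ref{cons}). The paper itself offers no more than the assertion that the extension property is ``straightforward,'' so a genuine argument here is welcome. But the step you lean on for the flags involving the Petrie type is wrong in a way that matters.

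You reduce the completion of a flag of type $\{i,j,k\}$ to the identity $(G_i\cap G_j)G_k=G_iG_k\cap G_jG_k$ and propose to verify it directly. That identity is the condition for $G$ to be transitive on flags of type $\{i,j,k\}$; it is \emph{sufficient} for those flags to be $G$-equivalent to the base flag, but it is not \emph{necessary} for them to extend to chambers, and in the present geometry it is actually \emph{false} for the triple $\{0,2,3\}$. The very next lemma of the paper shows that $G$ has two orbits on chambers, and the reason given is precisely that the residue of an incident face--Petrie-polygon pair is a path $v_1\,e_1\,v_2\,e_2\,v_3$ on which the stabilizer has two vertex-orbits, $\{v_2\}$ and $\{v_1,v_3\}$. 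Hence $\{f,p,v_1\}$ and $\{f,p,v_2\}$ are flags of type $\{0,2,3\}$ lying in different $G$-orbits, so $(G_0\cap G_3)G_2\neq G_0G_2\cap G_3G_2$ and your direct verification cannot succeed. The lemma is nevertheless true: what you must show instead is that \emph{each} orbit of such flags extends to a chamber, which is immediate from the residue description (every vertex of the path $v_1\,e_1\,v_2\,e_2\,v_3$ lies on one of the edges $e_1,e_2$, so any vertex incident to both $f$ and $p$ can be completed by an edge of that path). Your parenthetical ``zigzag'' remark is the right kind of argument, but it is not equivalent to the factorization identity as you claim; replace the appeal to flag-transitivity by an orbit-by-orbit geometric completion and the proof goes through.
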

We now prove that the geometry $\Gamma(\mathcal M)$ is not flag-transitive, but almost.

\begin{lemma}
The group $G$ has two orbits on the chambers of $\Gamma(\mathcal M)$.
\end{lemma}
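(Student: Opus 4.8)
The plan is to exploit that $G=\mathrm{Aut}(\mathcal M)$ acts regularly on the flags $\{v,e,f\}$ of the map, and to view each chamber of $\Gamma(\mathcal M)$ as such a flag together with a compatible Petrie polygon; counting chamber-orbits then reduces to counting how many Petrie polygons complete a single flag to a chamber.

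First I would record that, since $\mathcal M$ is reflexible, $G$ acts regularly (transitively and with trivial stabilizers) on the flags $\{v,e,f\}$, i.e. on the flags of type $\{0,1,2\}$ of $\Gamma(\mathcal M)$; equivalently $G_0\cap G_1\cap G_2 = 1$. Consequently the stabilizer of any chamber, being contained in the stabilizer of its underlying $\{v,e,f\}$-flag, is trivial, so every $G$-orbit on chambers has length $|G|$. Dropping the Petrie coordinate gives a $G$-equivariant surjection from the chambers onto the ($G$-transitive) set of $\{v,e,f\}$-flags; since the stabilizer of the base flag is trivial, the $G$-orbits on chambers are in bijection with the fibre over the base flag, that is, with the set of Petrie polygons incident to all of $G_0$, $G_1$ and $G_2$. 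It therefore suffices to show that this set has exactly two elements.

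Next I would compute this number inside the coset geometry. A completing Petrie polygon is a coset $gG_3$ incident to each of $G_0,G_1,G_2$; in particular it must be incident to the edge $G_1$, so $g\in G_1G_3$, and the number of cosets of $G_3$ meeting $G_1$ is $|G_1:G_1\cap G_3|$. Since $\rho_0\rho_2\in G_1\cap G_3$ we have $|G_1\cap G_3|\ge 2$, while $|G_1\cap G_3| = 4$ would force $G_1\subseteq G_3$, hence $\rho_0\in G_3$ and $G=\langle\rho_0,\rho_1,\rho_2\rangle = G_3$, which is impossible for a nondegenerate map; thus $|G_1:G_1\cap G_3| = 2$. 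The two cosets are $G_3$ itself and $\rho_0G_3 = \rho_2G_3$ (using $\rho_0\rho_2\in G_3$), and they are distinct because $\rho_0\notin G_3$. Finally I would check that both are genuine completions: $G_3$ contains $1$, hence is incident to $G_0,G_1,G_2$; and $\rho_0G_3$ is incident to $G_0$ because $\rho_0\in G_0$, to $G_1$ because $\rho_0\in G_1$, and to $G_2$ because $\rho_2\in G_2\cap \rho_0G_3$. Hence there are exactly two completing Petrie polygons, and so $G$ has exactly two orbits on chambers.

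The routine part is the reduction via the free, transitive action on flags; the crux is the count of completions. Geometrically this says that, given a flag $(v,e,f)$, a Petrie polygon through it must use $e$ together with one of the two edges of $f$ adjacent to $e$ (the one met at $v$ or the one met at the second endpoint $v'$ of $e$), giving precisely two choices. The main thing to get right is the bookkeeping identifying $G_1\cap G_3 = \langle\rho_0\rho_2\rangle$ and confirming that the second coset $\rho_0G_3$ really is incident to the vertex and the face, which is exactly where the equality $\rho_2G_3 = \rho_0G_3$ does the work.
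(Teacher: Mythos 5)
Your proof is correct, but it runs along a genuinely different fibration of the chamber set than the paper's. The paper fixes a flag of type $\{\text{face},\text{Petrie polygon}\}$, invokes transitivity of $G$ on such rank-two flags, and then analyses the residue: a path with three vertices and two edges, on whose four incident vertex--edge pairs the flag stabilizer has exactly two orbits (reflexibility supplies the reflection swapping the two end vertices, while the middle vertex can never be moved to an end). You instead project a chamber onto its underlying $\{v,e,f\}$-flag, on which $G$ acts regularly, so that orbits on chambers biject with the Petrie polygons completing the base flag, and you count these by the coset computation $|G_1:G_1\cap G_3|=2$ together with the identity $\rho_0G_3=\rho_2G_3$. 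Your route is more algebraic and arguably cleaner: because the base action is regular you never have to analyse a stabilizer acting on a fibre, and the equality $\rho_0G_3=\rho_2G_3$ isolates exactly where the ``two completions'' come from, whereas the paper must argue geometrically about which vertices of the residue path can be interchanged. The only soft spot is your dismissal of the case $|G_1\cap G_3|=4$ as ``impossible for a nondegenerate map'': that case is equivalent to $\rho_0\in\langle\rho_1,\rho_0\rho_2\rangle$, i.e.\ to $G=G_3$ being dihedral with a single Petrie polygon; such maps are indeed degenerate, and the paper's own proof tacitly excludes comparable degeneracies by assuming the residue path has three distinct vertices and two distinct edges, but a sentence making your exclusion explicit would be welcome.
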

\begin{proof}
The coset geometry $\Gamma(\mathcal M)$ constructed in this way is transitive on the flags consisting of a face and a Petrie polygon as every coset geometry of rank two is flag-transitive.
There will be two orbits on the chambers, due to the fact that the residue of a face and a Petrie polygon has three vertices and two edges such that the vertices and the edges form a path of length 2 on the face and on the Petrie polygon.
As $\mathcal M$ is reflexible, there is an element of $G$ that exchanges the two vertices at the extremities of this path of length two while fixing the face and the Petrie polygon, but the vertex in the middle of the path cannot be exchanged with one of the vertices at the extremities. This finishes the proof.
\end{proof}

\begin{remark}
The geometry $\Gamma(\mathcal M)$ is clearly not firm.
Indeed, again, the residue of a Petrie polygon and an incident face consists of three vertices joined by two edges (a path). Hence some rank one residues will have cardinality 1.
\end{remark}

Wilson's classes of maps translate naturally in four classes of coset geometries constructed from maps. 
\begin{itemize}
\item If $\mathcal M$ is of Class I, 
then ${\rm Cor}(\Gamma(\mathcal M))/{\rm Aut}(\Gamma(\mathcal M)) \cong \langle \rangle$;
\item If $\mathcal M$ is of Class II,
then ${\rm Cor}(\Gamma(\mathcal M))/{\rm Aut}(\Gamma(\mathcal M)) \cong C_2$.
\item If $\mathcal M$ is of Class III,
then ${\rm Cor}(\Gamma(\mathcal M))/{\rm Aut}(\Gamma(\mathcal M)) \cong C_3$.
\item If $\mathcal M$ is of Class IV,
then ${\rm Cor}(\Gamma(\mathcal M))/{\rm Aut}(\Gamma(\mathcal M)) \cong S_3$.
\end{itemize}
As we mentioned earlier, Wilson thought in~\cite{Wilson1979} that Class III maps were very rare, and Jones and Poulton constructed infinitely many of them in~\cite{JP2010}.
Looking at the literature on incidence geometries, it seems to us that incidence geometries admitting trialities but no dualities are rare.
Construction~\ref{cons} permits us to construct infinitely many examples.

\begin{theorem}
Let $\mathcal M$ be a Class III map. Then $\Gamma(\mathcal M)$ has trialities but no dualities.
\end{theorem}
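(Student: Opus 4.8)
The plan is to establish a correspondence between the Wilson operations on the map $\mathcal{M}$ and the type-permuting correlations of the geometry $\Gamma(\mathcal M)$. The key observation is that the four types of $\Gamma(\mathcal M)$ — faces ($G_0$), edges ($G_1$), vertices ($G_2$) and Petrie polygons ($G_3$) — correspond exactly to the four generator-configurations appearing in Figure~\ref{classes}, with the edges playing a distinguished role. Indeed, the duality $D$ transposes vertices and faces (fixing edges and Petrie polygons), the Petrie duality $P$ transposes faces and Petrie polygons (fixing edges and vertices), and the triality $DP$ permutes vertices, faces and Petrie polygons cyclically while fixing edges. So the natural homomorphism I want is from Wilson's group $\Sigma \cong S_3$ acting on $\mathcal M$ to the quotient ${\rm Cor}(\Gamma(\mathcal M))/{\rm Aut}(\Gamma(\mathcal M))$, and I claim it is in fact an isomorphism.

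\emph{First} I would make precise the dictionary already sketched in the four-way list preceding the theorem: a Wilson operation on $\mathcal M$ that sends the generating triple $(\rho_0,\rho_1,\rho_2)$ to an isomorphic triple induces a correlation of $\Gamma(\mathcal M)$ by relabelling the parabolic subgroups $G_0,G_1,G_2,G_3$ according to how the operation permutes faces, edges, vertices and Petrie polygons. Conversely, a type-permuting correlation of $\Gamma(\mathcal M)$ must preserve the incidence structure, and since the edge-type $G_1$ is geometrically distinguished (the edges carry the incidence inherited directly from the map, and the residues behave differently at edges), any correlation fixes the edge-type and permutes the remaining three types of faces, vertices and Petrie polygons. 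This permutation of three types, together with the requirement that incidence be preserved, is exactly what a Wilson operation realizes. \emph{Then} I would invoke the hypothesis that $\mathcal M$ is of Class III: by definition this means the six Wilson images split into two triples of pairwise isomorphic maps, so $\mathcal M$ admits the trialities $DP$ and $PD$ but neither $D$ nor $P$ nor $DPD=PDP$.

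\emph{Next}, translating through the dictionary, the existence of the triality $DP$ yields a correlation $\alpha$ of $\Gamma(\mathcal M)$ that cyclically permutes the three types face/vertex/Petrie while fixing the edge-type, and $\alpha$ has order three modulo ${\rm Aut}(\Gamma(\mathcal M))$; this is precisely a triality of the geometry, establishing that $\Gamma(\mathcal M)$ has a triality. For the second assertion, I argue by contradiction: a duality of $\Gamma(\mathcal M)$ would be an order-two type-permuting correlation, hence would transpose two of the three types face/vertex/Petrie while fixing the edge-type, and running the dictionary backwards this would produce one of the Wilson operations $D$, $P$ or $DPD$ on $\mathcal M$. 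But a Class III map admits none of these, contradicting the Class III hypothesis. Hence $\Gamma(\mathcal M)$ has no duality.

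\emph{The main obstacle} I anticipate is the converse half of the dictionary — showing that \emph{every} type-permuting correlation of $\Gamma(\mathcal M)$ actually arises from a Wilson operation and that no \emph{spurious} correlations exist that permute types in a way not visible on the map. Concretely, I must verify that a correlation cannot move the edge-type $G_1$ to one of the other three, and that the subgroup of ${\rm Cor}(\Gamma(\mathcal M))$ inducing a given permutation of the three remaining types is nonempty exactly when the corresponding Wilson operation fixes $\mathcal M$. This amounts to checking that the abstract incidence pattern of $\Gamma(\mathcal M)$ forces the edge-type to be correlation-invariant, which follows from the structural asymmetry noted in the remark (the geometry is not firm precisely because of the edge/Petrie residues) — the edge-type has a different local residue structure than the other three, so no correlation can carry it elsewhere. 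Once this rigidity is secured, the isomorphism $\Sigma \cong {\rm Cor}(\Gamma(\mathcal M))/{\rm Aut}(\Gamma(\mathcal M))$ follows and the theorem is immediate from the Class III assumption.
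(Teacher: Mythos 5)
Your proposal is correct and follows essentially the same route as the paper: translate the Wilson operations fixing $\mathcal M$ into type-permuting correlations of $\Gamma(\mathcal M)$ (the triality $DP$ giving a correlation that cycles $G_0,G_2,G_3$ and fixes $G_1$), and rule out dualities by observing that one would pull back to $D$, $P$ or $DPD$ on $\mathcal M$, contradicting Class III. If anything you are more explicit than the paper about the one point it leaves implicit --- that no correlation can move the edge type, which your residue/valency asymmetry argument handles --- though your closing claim that $\Sigma\cong{\rm Cor}(\Gamma(\mathcal M))/{\rm Aut}(\Gamma(\mathcal M))$ is a slip (for a Class III map that quotient is $C_3$, the image of the stabilizer of $\mathcal M$ in $\Sigma$, not all of $S_3$); this does not affect the argument.
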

\begin{proof}
If $\mathcal M$ is of class III, $\mathcal M \cong DP(\mathcal M) \cong PD(\mathcal M)$ and $D(\mathcal M) = P(\mathcal M) = DPD(\mathcal M)$.
In that case, $\mathcal M$ is $\Sigma^+$-invariant (where $\Sigma^+$ is the subgroup of order 3 of $\Sigma$, see Section~\ref{definitions}) and by~\cite[Lemma 2.1]{JP2010}, the map is of type $\{n,n\}_n$ for some positive integer $n$.
Moreover, for $\mathcal M$ to be in Class III, the automorphism group $G:=\langle \rho_0,\rho_1,\rho_2\rangle$ of $\mathcal M$ should have an element $\alpha$ in its automorphism group ${\rm Aut}(G)$ that permutes $\rho_0$, $\rho_2$ and $\rho_0\rho_2$ and fixes $\rho_1$. Hence $\alpha$ must normalize $G$, must also centralize $\rho_1$ and normalize $\langle \rho_0,\rho_2\rangle$. In other words, $A_4 \cong \langle \alpha,\rho_0,\rho_2\rangle < Aut(G)$.
This automorphism of order three gives a triality for $\Gamma(\mathcal M)$.
as it is interchanging $G_0$, $G_2$ and $G_3$ of $\Gamma(\mathcal M)$ and fixing $G_1$.

Suppose $\Gamma(\mathcal M)$ has a duality. This duality would necessarily swap two sets among the vertex-set, set of faces and set of Petrie polygons. This would translate in $\mathcal M$ having a duality or a Petrie-duality, and therefore $\mathcal M$ would be of Class IV, a contradiction.
\end{proof}

%If $\mathcal M$ is a map of type $\{m,m\}_m$ and $n$ is the number of vertices of $\mathcal M$, then the incidence geometry will have $3n$ elements and $3\cdot n\cdot m$ chambers. All of its rank two truncations are isomorphic (thanks to the triality). Every rank one residue has three elements. Therefore this geometry is thick.
%Moreover the geometry is residually connected. But, as pointed out above, it is not flag-transitive. Its chambers split in two orbits under the action of the automorphism group, one of size $2\cdot n\cdot m$ and the other of size $n\cdot m$. 
%The orbit of size $2\cdot n\cdot m$ consists of the triples containing one vertex, one face and one Petrie polygon such that they are pairwise incident to two edges each, but the intersection of these three pairs of edges is a single edge. The orbit of size $n\cdot m$ consists of the triples containing one vertex, one face and one Petrie polygon such that they all three are incident to the same pair of edges.

Looking at some atlases of geometries that have been published in the past (see for instance~\cite{BDL96, Lee2008, atlasthin, thinsz8}), the only examples we were able to find of geometries admitting a triality but no duality were geometries 8, 86, 182 and 183 in~\cite[Table 1]{thinsz8}.
In the present paper, the geometries with triality coming from Construction~\ref{cons} are not flag-transitive while the references above concern flag-transitive geometries.
It would be interesting to try to construct infinitely many examples of flag-transitive incidence geometries that admit trialities but no dualities.
As suggested in Remark~\ref{out}, groups $G$ with ${\rm Out}(G)$ having a cyclic subgroup of order three but no subgroup $S_3$ seem to be a good starting point to do that.

%{\color{yellow} Should we say something about this construction of an incidence geometry being possible for any map/polyhedron? You should be able to do this for any polytope indeed. I am also interested in understanding when/why  this is not a doubling construction. Or rather, why we cannot make a new version of the doubling construction out of this. }

\end{document}